\newtheorem{theorem}{Theorem}
\newtheorem{proposition}[theorem]{Proposition}
\newtheorem{lemma}[theorem]{Lemma}
\newtheorem*{lemma*}{Lemma}
\newtheorem*{assumption*}{Assumption}
\newcounter{assumptionc}
\newtheorem{corollary}[theorem]{Corollary}
\newtheorem{definition}[theorem]{Definition}
\newtheorem{remark}[theorem]{Remark}
\numberwithin{equation}{section}
\numberwithin{theorem}{section}
\newtheorem{ex}[theorem]{Example}
\newenvironment{assumption+}
 {\ifnum\value{subassumption}=0 \stepcounter{assumptionc}\fi\subassumption}
 {\endsubassumption}
\def\text#1{\hbox{#1}}
\def\build #1_#2{\mathrel{\mathop{\kern 0pt #1}\limits_\zs{#2}}}
\newcommand{\zs}[1]{{\mathchoice{#1}{#1}{\lower.25ex\hbox{$\scriptstyle#1$}}
{\lower0.25ex\hbox{$\scriptscriptstyle#1$}}}}
\numberwithin{equation}{section}
\newtheorem*{example*}{Example}
\newcommand{\tr}{\mathrm{tr}}
\newcommand{\PP}{\mathbb{P}}
\newcommand{\EX}{\mathbb{E}}
\newcommand{\Real}{\mathbb{R}}
\newcommand{\N}{\mathbb{N}}
\newcommand{\BB}{\mathcal{B}}
\newcommand{\diff}{\mathrm{d}}
\newcommand\norm[1]{\left\lVert#1\right\rVert}
\renewcommand{\norm}[1]{\left\lVert#1\right\rVert}
\newenvironment{myproof}[1][\proofname]{%
  \par\pushQED{\qed}\normalfont%
  \topsep6\p@\@plus6\p@\relax
  \trivlist\item[\hskip\labelsep\bfseries#1\@addpunct{.}]%
  \ignorespaces
}{%
  \popQED\endtrivlist\@endpefalse
}
\newcommand{\myitem}[1]{%
	\item[#1]\protected@edef\@currentlabel{#1}%
}
\def\dfrac{\displaystyle\frac}
\def\FF{\mathcal{F}}
\newcommand{\LL}{\mathcal{L}}
\begin{document}

\title{The $C^{0,1}$ It{\^o}-Ventzell formula for weak Dirichlet processes}
\author{Felix Fie{\ss}inger\footnote{University of Ulm, Institute of Insurance Science and Institute of 		Mathematical Finance, Ulm, Germany. Email: felix.fiessinger@uni-ulm.de}  \, and Mitja Stadje\footnote{University of Ulm, Institute of Insurance Science and Institute of Mathematical Finance, Ulm, Germany. Email: mitja.stadje@uni-ulm.de}}
\date{\today}
\maketitle
\begin{abstract}
	This paper proves an extension of the It{\^o}-Ventzell formula that applies to stochastic flows in $C^{0,1}$ for continuous weak Dirichlet processes. We apply this theorem, for example, to give a representation result for strong solutions of time-dependent elliptic SPDEs, to derive formulas for quadratic variations, and to relax assumptions in a financial mathematics context.
\end{abstract}

\noindent\textbf{Keywords:} Generalized It{\^o}-Ventzell formula, weak Dirichlet process, Integration via regularization\\

\noindent\textbf{2020 MSC:} 60H05, 60G48, 60G44, 60J65

%stochastic integrals, Generalizations of martingales, martingales with continuous parameter, Brownian Motion

\section{Introduction}

In this paper, we show an extension of the It{\^o}-Ventzell formula for continuous weak Dirichlet processes, such that the underlying stochastic flow only needs to be in $C^1$ instead of being in $C^2$ in the spacial argument under an additional mild assumption. The It{\^o}-Ventzell formula is a generalization of the It{\^o} formula for stochastic flows. Specifically, let $X=M+A$ be a continuous semimartingale, where $M$ is a local martingale and $A$ is a process of finite variation, and let $F(t,x)$ be a stochastic flow with local characteristics $(\beta(t,x),\gamma(t,x))$, i.e., $F(t,x)$ is a random field and admits the It{\^o} dynamics
\begin{align*} 
	F(t,x) = F(0,x) + \int_0^t \beta (r,x) \diff r + \int_0^t \gamma (r,x) \diff W_r,
\end{align*}
where $W$ is a Brownian Motion. Moreover, we assume that $F$ is a continuous progressively measurable process for fixed $x$ and twice differentiable in $x$ for fixed $t$ and $\omega$. Note that the dependence of $F$, $\beta$, and $\gamma$ on $\omega$ is suppressed. Then the It{\^o}-Ventzell formula states that $F(t,X_t)$ is also a continuous semimartingale and has the following decomposition:
\begin{align*}
F(t,X_t) =& \ F(0,X_0) + \int_0^t \gamma (s,X_s) \diff W_s + \int_0^t F_x (s,X_s) \diff M_s + \BB^X (F)_t,
\end{align*}
where
\begin{align*}
	\BB^X (F)_t = \int_0^t \beta (s,X_s) \diff s + \int_0^t F_x (s,X_s) \diff A_s + \dfrac{1}{2} \int_0^t F_{xx} (s,X_s) \diff [X]_s + \int_0^t \gamma_x (s,X_s) \diff [X,W]_s. 
\end{align*}
The It{\^o}-Ventzell formula links stochastic processes to stochastic partial differential equations (SPDEs) and has various applications. It is used, for instance, to analyze various optimal control problems, to determine localization errors of SPDEs, for generalized stochastic forward integrals, for forward performance processes, and to develop pricing models. We apply our new version of the It{\^o}-Ventzell formula to derive stochastic calculus formulas and to obtain a representation result for strong solutions of time-dependent stochastic elliptic PDEs. Here, we generalize naturally the solution theory of SPDEs to the setting of the paper. Moreover, we apply our result in a financial mathematics context to obtain the dynamics of the portfolio of a large investor whose trading influences prices.

Multiple works generalize the classical It{\^o} formula in different directions, see, for instance, F{\"o}llmer et al. \cite{follmer1995quadratic}, Elworthy et al. \cite{elworthy2007generalized} or da Prato et al. \cite{da2019mild}. We focus on the extension discovered by Ventzell \cite{ad1965equations} in 1965, the It{\^o}-Ventzell formula, which was generalized later by Kunita \cite{kunita1997stochastic}. Since then, multiple works extended the It{\^o}-Ventzell formula, see, for instance, Flandoli and Russo \cite{flandoli2002generalized}, Krylov \cite{krylov2009wentzell}, Karachanskaya \cite{karachanskaya2011generalization}, Kohlmann et al. \cite{Kohlmann}, or dos Reis and Platonov \cite{dos2022ito}.

Our results hold for weak Dirichlet processes, the sum of a local martingale and an adapted process with zero covariation with all continuous local martingales, including semimartingales as a special case. Since the classical theorem of stochastic integration is defined only with respect to semimartingales, we use an extension of stochastic integration developed by Russo and Vallois \cite{russo1991integrales,russo1993forward,russo1995generalized,russo1995generalized,russo1996ito,Russo}, Coquet et al. \cite{coquet2006natural}, and Bandini and Russo \cite{bandini2017weak,bandini2022weak}. 
Considering weak Dirichlet processes is not unusual in generalizations of It{\^o}'s-formula which require milder smoothness conditions on the function $F$, see Gozzi and Russo \cite{Gozzi} who generalized the classical It{\^o} formula to $C^{0,1}$, Bouchard et al. \cite{bouchard2022} for the functional It{\^o} formula, or Bouchard and Vallet \cite{bouchard2021dupire} for the It{\^o}-Dupire formula. 

The paper is structured as follows: In Section \ref{sec: basic definition}, we introduce weak Dirichlet processes, generalized stochastic integration, and stochastic flows. Section \ref{sec: main result} gives our result, namely the extension of the It{\^o}-Ventzell formula to stochastic flows in $C^{0,1}$, while the proof of this extension is shown in Section \ref{sec: proof of main result}. Section \ref{sec: applications} gives some applications of our extended version of the It{\^o}-Ventzell formula.

\section{Basic Definitions} \label{sec: basic definition}

Let $(\Omega,\FF,(\FF_t)_{t \in [0,T]},\PP)$ be a stochastic basis where the filtration $\FF$ is generated by a Brownian Motion $W$ satisfying the usual conditions and $[0,T]$ is the observation period.
We define $f_x(t,x)$ to be the partial derivative of $f$ in the second argument and denote higher derivatives similarly. %Moreover, $\langle X\rangle$ (resp. $\langle X,Y\rangle$) is defined as the quadratic variation (resp. covariation) of $X$ (resp. $X$ and $Y$). 
%Furthermore, we denote by $\LL^p_{loc} (\Omega,\mathcal{F},\PP)$ the set of all $\mathcal{F}$-measurable and $p$-times locally integrable functions for $p \in [1,\infty)$, i.e.,
%\begin{align*}
%	\LL^p_{loc}(\Omega,\mathcal{F},\PP)=\left\{ f: \Omega \to \Real : f \text{ is measurable}, \int_K \left|f(x)\right|^p \PP(\diff x) < \infty \text{ for all $K \subset \Omega$ compact} \right\}
%\end{align*} 
%and by $\LL^{\infty}_{loc} (\Omega,\mathcal{F},\PP)$ the set of all $\mathcal{F}$-measurable and locally almost everywhere bounded functions, i.e., 
%\begin{align*}
%	\LL^{\infty}_{loc}(\Omega,\mathcal{F},\PP)=\left\{ f: \Omega \to \Real : f \text{ is measurable},\ \esssup_{x \in K} \left|f(x)\right| < \infty \text{ for all $K \subset \Omega$ compact} \right\}.
%\end{align*}
%In line with that, $\LL^p (\Omega,\mathcal{F},\PP) \subset \LL^p_{loc} (\Omega,\mathcal{F},\PP)$ (resp. $\LL^{\infty} (\Omega,\mathcal{F},\PP) \subset \LL^{\infty}_{loc} (\Omega,\mathcal{F},\PP)$) defined as
%\begin{align*}
%	\LL^p(\Omega,\mathcal{F},\PP) &=\left\{ f: \Omega \to \Real : f \text{ is measurable}, \int_{\Omega} \left|f(\omega)\right|^p \PP(\diff x) < \infty  \right\}, \\
%	\LL^{\infty}(\Omega,\mathcal{F},\PP)&=\left\{ f: \Omega \to \Real : f \text{ is measurable},\ \esssup_{\omega \in \Omega} \left|f(\omega)\right| < \infty \right\}.
%\end{align*} 
%Often $\mathcal{F}$ will be omitted.
In this paper, we use the space of progressively measurable and continuous processes equipped with the topology of uniform convergence on compacts in probability. We denote by $C^{n,m+\alpha}$ the set of functions which are $n$-times continuously differentiable in the first and $m$-times in the second argument where the $m$th derivative is $\diff \PP \times \diff t$-a.s. locally $\alpha$-H{\"o}lder continuous with $\alpha>0$. If the $m$th derivative is just continuous, we simply write $C^{n,m}$, and $C^{0,0}$ denotes the set of all continuous functions. Furthermore, we denote by $\LL^1 (\mu)$ (resp. $\LL^2 (\mu)$) the space of integrable (resp. square integrable) random variables/measurable functions with respect to a measure $\mu$.
\begin{definition}
	The sequence of progressively measurable processes $(H^n)$ converges uniformly on compacts in probability ($ucp$) to a process $H$ if $\sup_{s \in [0,t]} \left| H^n_s -H_s \right| \xrightarrow{n \to \infty} 0$ in probability for all $t>0$.
\end{definition}

Our processes may not be semimartingales and may not admit a quadratic (co-)variation. For this purpose, we need generalizations of these concepts, see Gozzi and Russo \cite[p.1567]{Gozzi}:

\begin{definition} \label{def: quadratic variation alternative definition}
	Let $X$ and $Y$ be two continuous processes of the same dimension. Then, we define for all $t \in [0,T]$:
	\begin{align*}
		[X,Y]_t &= \lim\limits_{\varepsilon \downarrow 0} \int_0^t \dfrac{(X_{r+\varepsilon}-X_r)(Y_{r+\varepsilon}-Y_r)}{\varepsilon} \diff r, \\
		\int_0^t X_r \diff^- Y_r &= \lim\limits_{\varepsilon \downarrow 0} \int_0^t X_r \dfrac{Y_{r+\varepsilon}-Y_{r}}{\varepsilon} \diff r,
	\end{align*}
	if those limits exist in the sense of $ucp$-convergence. We call $[X,Y]_t$ the covariation, and $\int_0^t X_r \diff^- Y_r$ the forward integral. We use the short notation for the covariation: $[X]_t := [X,X]_t$. 
\end{definition}

Definition \ref{def: quadratic variation alternative definition} ensures that the covariation and the forward integral are again continuous processes and that the covariation is bilinear and symmetric. In particular, the (forward) integral against the covariation is well-defined. For a more detailed explanation of the covariation and the integral against it, see Russo and Vallois \cite[pp. 83-97]{russo1995generalized}.

\begin{remark} \label{rem: constant continuation}
	Due to Definition \ref{def: quadratic variation alternative definition}, the functions inside of the covariation and the integral need to be defined on $[0,T+\varepsilon]$ instead of $[0,T]$. Consequently, we use the convention that these processes or functions are constantly continued on $(T,T+\varepsilon]$, e.g., we define $X_r = X_T$ for all $r>T$. 
\end{remark}

Definition \ref{def: quadratic variation alternative definition} ensures that the covariation and the forward integral are continuous. If $X$ and $Y$ are semimartingales, then the concepts above coincide with the quadratic covariation and the usual notion of a stochastic integral. While semimartingales are of finite quadratic variation, this is, of course, not true for general continuous processes. Hence, we define the following:

\begin{definition}
	Let $X$ be a continuous and adapted process. If $[X]_t < \infty$ for all $t \in [0,T]$, then we call $X$ a finite quadratic variation (FQV) process.
\end{definition}

The following definition is standard, see, for instance, Russo and Vallois \cite{russo1991integrales,russo1993forward,russo1995generalized,russo1995generalized,russo1996ito,Russo}, Coquet et al. \cite{coquet2006natural}, Bandini and Russo \cite{bandini2017weak,bandini2022weak} or Gozzi and Russo \cite[p.1570]{Gozzi}:
\begin{definition}
	A real continuous and adapted process $X$ is called a weak Dirichlet process if $X$ is decomposable into a local martingale $M$ and a weak zero energy process $A$, i.e., $X=M+A$. \\
	For the process $A$, it holds: $A_0=0$ and $[A,N]_t=0$ for all continuous local martingales $N$ and $t \in [0,T]$. 
\end{definition}
In general, this decomposition is not unique. However, when $M$ is required to be continuous, there exists a unique decomposition. %(see, for instance, Bandini and Russo \cite{bandini2022weak}). 
In the following, we always use the latter decomposition. In particular, $A$ is continuous, since by assumption $X$ and $M$ are. If $X$ is additionally a semimartingale, $A$ is a continuous finite variation process.

\begin{ex}
	The following processes are weak-Dirichlet processes according to Gozzi and Russo \cite[pp.1570-1571]{Gozzi}, and Errami and Russo \cite[p.267]{errami2003n}:
	\begin{itemize}
		\item all semimartingales,
		\item all Dirichlet processes $X$, i.e., $X$ is decomposable into a local martingale and a zero quadratic variation process,
		%\item all processes which are independent of the filtration $\FF$,
		\item the process $X_t = \int_0^t G(t,r) \diff M_r$, where $G: \{ 0 \leq r \leq t \leq 1\} \to \Real$ is a continuous $(\FF_r)$-measurable random field and $M$ is a local continuous $(\FF_t)$-martingale.
	\end{itemize}
\end{ex}

Contrary to the classical It{\^o} formula, the It{\^o}-Ventzell formula considers stochastic flows, which are defined in the following definition. Similar, but slightly different definitions can be found, e.g., in Kohlmann et al. \cite[p.7]{Kohlmann} or dos Reis and Platonov \cite[p.6]{dos2022ito}.
\begin{definition} \label{def: stochastic flow}
	Let $F(t,x)=F(\omega,t,x): \Omega \times [0,\infty) \times \Real^d \to \Real$ be a stochastic flow with local characteristics $(\beta(t,x),\gamma(t,x))$, i.e., $F$ is a random field which admits the It{\^o} dynamics
	\begin{align} \label{eq: definition formula F}
		F(t,x) = F(0,x) + \int_0^t \beta (r,x) \diff r + \int_0^t \gamma (r,x) \diff W_r,
	\end{align}
	where
	\begin{enumerate}[(a)]
		\item $W$ is the Brownian motion which generates the filtration $\FF$,
		\item $F(\cdot,x)$ is a progressively measurable process for all fixed $x \in \Real^d$,
		\item $\beta: \Omega \times \Real_{\geq 0} \times \Real^d \to \Real$ and $\gamma: \Omega \times \Real_{\geq 0} \times \Real^d \to \Real^d$ are previsible random fields,
		\item $\gamma(t,x)$ is $\diff \PP \times \diff t$-a.s. locally $\alpha$-H{\"o}lder continuous in $x$ for an $\alpha >0$, and
		\item \label{def, item: integrability}for any $K \subset \Real^d$ compact and for any $S \geq 0$ it holds a.s. that $\int_0^S  \sup_{x \in K} |\beta(t,x)| \diff t < \infty$, and $\int_0^S  \sup_{x \in K} |\gamma(t,x)|^2 \diff t < \infty$.
	\end{enumerate}
\end{definition}
The dependence on $\omega$ will be suppressed throughout this paper unless it is necessary for the steps of a proof.

\begin{remark}
	The local $\alpha$-H{\"o}lder continuity of $\gamma$ is necessary to ensure through Kolmogorov's continuity theorem that the stochastic integral $\int_0^t \gamma (r,x) \diff W_r$ is a.s. well-defined for all $(t,x)$. 
	We refer to Kunita \cite{kunita1997stochastic} for details.
\end{remark}

\section{Main Result} \label{sec: main result}

The following result states the It{\^o}-Ventzell formula for stochastic flows in $C^{0,1}$:

\begin{theorem} \label{th: main result}
	Let $X$ be a continuous weak Dirichlet FQV process and $F \in C^{0,1}$ be a stochastic flow with local characteristics $(\beta(t,x),\gamma(t,x))$. Furthermore, we assume: 
	\begin{enumerate}
		\myitem{1a} \label{th, item: beta integrability} $\int_0^T \sup_{s \in [0,T]} |\beta (s,X_t)| \diff t < \infty$ a.s., or
		\myitem{1b} \label{th, item: beta integral expected value} there exist a $B_1>0$ such that $\int_0^T \sup_{s \in [0,T]} \EX \left[ \left| \beta(s,X_t)\right| \right] \diff t \leq B_1$.
		%\myitem{2a} \label{th, item: gamma bounded} $\sup_{s,t \in [0,T]} |\gamma (s,X_t)| < \infty$ a.s., or
		%\myitem{2b} \label{th, item: gamma expected value} there exist $\varrho,B_2>0$ such that $\sup_{s \in [0,T]} \EX \left[\gamma(s,X_t)^{2+\varrho}\right] \leq B_2$ for all $t \in [0,T]$.
		\myitem{2} \label{th, item: gamma lemma} for all $K \subset \Real^d$ compact and $p \geq 1$, we suppose the existence of a $C_p \in \Real$ such that for any $x,y \in K$ holds that $\EX \left[\sup_{t \in [0,T]} |\gamma(t,x)-\gamma(t,y)|^p\right] \leq C_p |x-y|^p$ and $\EX \left[\sup_{t \in [0,T]} |\gamma(t,0)|^p \right] < \infty$.
		\myitem{3} \label{th, item: gamma continuity} $\gamma(t,X_t)$ is a.s. c{\`a}dl{\`a}g (right continuous with left limits).
	\end{enumerate}

	Then it holds for all $t \in [0,T]$:
	\begin{align*}
		F(t,X_t) = F(0,X_0) + \int_0^t \gamma (r,X_r) \diff W_r + \int_0^t F_x (r,X_r) \diff M_r + \BB^X (F)_t,
	\end{align*}
	where $\BB^X$ is linear and continuous (in $F$) with respect to the topology of $ucp$ convergence and $F(t,X_t)$ is again a continuous weak Dirichlet process. Moreover,
	\begin{enumerate}[(a)]
		\item \label{th, item: proof a}$\BB^X$ is a weak zero energy process, and
		\item \label{th, item: proof b}if $F(t,x)\in C^{0,2}$, $\beta(t,x) \in C^{0,1}$, and $\gamma(t,x) \in C^{0,1+\alpha}$ for an $\alpha \in (0,1)$, then it holds:
		\begin{align*}
			\BB^X (F)_t =& \int_0^t \beta (r,X_r) \diff r + \int_0^t F_x (r,X_r) \diff^{-} A_r + \int_0^t \gamma_x (r,X_r) \diff [X,W]_r + \frac{1}{2} \int_0^t F_{xx} (r,X_r) \diff [X]_r.
		\end{align*}
	\end{enumerate}
\end{theorem}

In the last two integrals of Theorem \ref{th: main result} \eqref{th, item: proof b} both the integrand and the integrator are matrices. These integrals are, as usual, defined as the trace of the matrix of the corresponding integrals. The assumptions of Theorem \ref{th: main result} are satisfied (through Assumption \ref{th, item: beta integrability}) %and \ref{th, item: gamma bounded}) 
if the mappings $(t,x) \mapsto \beta(t,x)$ and $(t,x) \mapsto \gamma(t,x)$ are a.s. continuous, $\sup_{t \in [0,T]} |\gamma(t,0)|$ is $p$-integrable for all $p \in [1,\infty)$ and $x \mapsto \gamma(\cdot,x)$ is a.s. locally Lipschitz continuous with a Lipschitz constant independent of $t$ which is integrable in $\omega$.

The theorem generalizes the result of Gozzi and Russo \cite[pp.1571-1574]{Gozzi} to stochastic flows. 

\begin{corollary}
	Let the assumptions of Theorem \ref{th: main result} hold. Moreover, assume that $F(\cdot,X)$ is a local martingale. Then, it holds for all $t \in [0,T]$:
	\begin{align*}
		F(t,X_t) = F(0,X_0) + \int_0^t \gamma (r,X_r) \diff W_r + \int_0^t F_x (r,X_r) \diff M_r.
	\end{align*}
\end{corollary}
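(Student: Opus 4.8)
The plan is to apply Theorem~\ref{th: main result} and then cancel the term $\BB^X(F)$ by invoking uniqueness of the weak Dirichlet decomposition. First I would invoke Theorem~\ref{th: main result}: since the hypotheses of the Corollary contain those of the Theorem, for every $t\in[0,T]$ we have
\begin{align*}
	F(t,X_t) = F(0,X_0) + \int_0^t \gamma(r,X_r)\diff W_r + \int_0^t F_x(r,X_r)\diff M_r + \BB^X(F)_t,
\end{align*}
where $\BB^X(F)$ is a weak zero energy process; evaluating at $t=0$ gives $\BB^X(F)_0=0$, and $\BB^X(F)$ is continuous because $F(\cdot,X_\cdot)$, $F(0,X_0)$ and the two stochastic integrals are continuous.

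Next I would set $N_t := \int_0^t \gamma(r,X_r)\diff W_r + \int_0^t F_x(r,X_r)\diff M_r$ and note that $N$ is a continuous local martingale, being a sum of stochastic integrals against the continuous local martingales $W$ and $M$ (recall $X=M+A$ with $M$ continuous, and that for such integrands the forward integral coincides with the It{\^o} integral). Hence $Y_t := F(t,X_t)-F(0,X_0) = N_t + \BB^X(F)_t$ exhibits $Y$ as a continuous weak Dirichlet process with continuous local martingale part $N$ and weak zero energy part $\BB^X(F)$. On the other hand, the additional hypothesis says $F(\cdot,X_\cdot)$ is a local martingale, so $Y$ is itself a continuous local martingale, and therefore also admits the trivial decomposition $Y = Y + 0$, where the null process is (trivially) a weak zero energy process. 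By the uniqueness of the decomposition of a continuous weak Dirichlet process into a continuous local martingale and a weak zero energy process (see Section~2), the two decompositions of $Y$ must coincide, so $\BB^X(F)_t=0$ for all $t\in[0,T]$ (and $N_t = F(t,X_t)-F(0,X_0)$). Substituting back yields the claimed identity.

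The result being a short corollary, I do not anticipate a genuine obstacle; the only points requiring a little care are checking that $N$ is indeed a continuous local martingale and that $\BB^X(F)$ is continuous with $\BB^X(F)_0=0$, so that the cited uniqueness statement is applicable. Both follow immediately from Theorem~\ref{th: main result} and Definition~\ref{def: stochastic flow}.
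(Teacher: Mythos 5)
Your proposal is correct and follows essentially the same route as the paper: apply Theorem \ref{th: main result} and then eliminate $\BB^X(F)$ by exploiting that it is simultaneously a continuous local martingale (as a difference of local martingales) and a weak zero energy process. The only cosmetic difference is that you invoke the uniqueness of the weak Dirichlet decomposition stated in Section 2, while the paper argues directly that $[\BB^X(F)]=0$ forces $\BB^X(F)\equiv 0$ --- which is precisely the fact underlying that uniqueness.
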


\begin{proof}
	Using Theorem \ref{th: main result}, we get:
	\begin{align*}
		\BB^X (F)_t = F(t,X_t) - F(0,X_0) - \int_0^t \gamma (r,X_r) \diff W_r - \int_0^t F_x (r,X_r) \diff M_r.
	\end{align*}
	Since $F(\cdot,X)$, $M$, and $W$ are local martingales, $\BB^X (F)$ is a local martingale as well, and Theorem \ref{th: main result} states that $\BB^X (F)$ is a continuous weak zero energy process. Thus, $[\BB^X (F)]=0$ which implies that $\BB^X (F) \equiv 0$. Hence, Theorem \ref{th: main result} finalizes the proof.
\end{proof}

\begin{remark}
	Theorem 3.1 in Krylov \cite{krylov2009wentzell} provides a distribution-valued version of the It{\^o}-Ventzell formula. The main difference between his and our version of the It{\^o}-Ventzell formula is that in Theorem 3.1 in Krylov \cite{krylov2009wentzell}, one needs the existence of a second derivative in the space direction (in the distributional sense) of the function $F$, while we only need the first derivative. On the other hand, Krylov \cite{krylov2009wentzell} considers possibly infinitely many space directions and does not need joint continuity in time and space directions and the previsibility of the characteristics. Contrary, we need less regularity for the characteristics, i.e., we do not need to assume that $\beta$ and $\gamma$ are continuous. Regarding the integrability, our Theorem \ref{th: main result} and Theorem 3.1 in Krylov \cite{krylov2009wentzell} have somewhat similar assumptions.
\end{remark}

\section{Proof of Theorem \ref{th: main result}} \label{sec: proof of main result}

Before proving the theorem, we need some preliminary results. First of all, we assume $d=1$ to simplify the notation. In the whole section, we use the convention from Remark \ref{rem: constant continuation} for the constant continuation of processes and functions. 

By recalling Proposition 1.2 of Russo and Vallois \cite[p.89]{russo1995generalized} it holds:
\begin{lemma} \label{rem: cadlag ucp}
	Let $X$ and $Y$ be two FQV processes and $H$ be a c{\`a}dl{\`a}g process defined on the interval $[0,T]$. Then, it holds:
	\begin{align*}
		\int_0^t H(r) \frac{(X_{r+\varepsilon}-X_{r})(Y_{r+\varepsilon}-Y_{r})}{\varepsilon} \diff r \xrightarrow[ucp]{\varepsilon \to 0} \int_0^t H(r) \diff [X,Y]_r
	\end{align*}
\end{lemma}

\begin{lemma} \label{lem: ucp convergence double}
	Let $X$, $Y$ be two FQV processes, $t \in [0,T]$. Moreover, let $Z^{\varepsilon}$ be a measurable process in $r$ for all $\varepsilon>0$ such that  $\sup_{r \in [0,T]} \left|Z_r^{\varepsilon} \right| \xrightarrow[a.s.]{\varepsilon \to 0} 0$. Then:
	\begin{align*}
		\int_0^t Z^{\varepsilon}_r \frac{(X_{r+\varepsilon}-X_{r})(Y_{r+\varepsilon}-Y_{r})}{\varepsilon} \diff r \xrightarrow[ucp]{\varepsilon \to 0} 0.
	\end{align*}
\end{lemma}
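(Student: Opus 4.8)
The strategy mirrors the proofs of Lemmas~\ref{lem: ucp convergence single} and~\ref{lem: ucp convergence single L1}: pull the uniformly small factor out of the integral in supremum norm, and control the remaining integral by Cauchy--Schwarz against the two quadratic increments, each of which converges in probability to a finite limit by the FQV assumption.

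First I would bound the supremum over $t \in [0,T]$ pathwise by the absolute integral over $[0,T]$, and then factor out $\sup_{r \in [0,T]}|Z_r^\varepsilon|$:
\begin{align*}
	\sup_{t \in [0,T]} \left| \int_0^t Z^{\varepsilon}_r \frac{(X_{r+\varepsilon}-X_{r})(Y_{r+\varepsilon}-Y_{r})}{\varepsilon} \diff r \right|
	&\leq \sup_{r \in [0,T]} \left|Z_r^{\varepsilon}\right| \cdot \int_0^T \frac{\left|X_{r+\varepsilon}-X_{r}\right|\left|Y_{r+\varepsilon}-Y_{r}\right|}{\varepsilon} \diff r.
\end{align*}
Next I would apply the Cauchy--Schwarz inequality to the integral factor, writing it as $\int_0^T \bigl(|X_{r+\varepsilon}-X_r|/\sqrt\varepsilon\bigr)\bigl(|Y_{r+\varepsilon}-Y_r|/\sqrt\varepsilon\bigr)\diff r$, which is at most
\begin{align*}
	\sqrt{\int_0^T \frac{(X_{r+\varepsilon}-X_{r})^2}{\varepsilon} \diff r}\ \sqrt{\int_0^T \frac{(Y_{r+\varepsilon}-Y_{r})^2}{\varepsilon} \diff r}.
\end{align*}
By Definition~\ref{def: quadratic variation alternative definition} and the FQV hypothesis, the first square root converges in probability to $\sqrt{[X]_T}<\infty$ and the second to $\sqrt{[Y]_T}<\infty$ as $\varepsilon \downarrow 0$; hence their product is bounded in probability. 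Since the prefactor $\sup_{r \in [0,T]}|Z_r^\varepsilon|$ tends to $0$ in probability by assumption, the product of a sequence converging to $0$ in probability with one that is bounded in probability converges to $0$ in probability, which is exactly the claimed $ucp$ convergence (the bound on the right-hand side is already uniform in $t$).

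I do not anticipate a real obstacle here; the only points requiring a word of care are the use of the constant-continuation convention from Remark~\ref{rem: constant continuation} so that the increments at times near $T$ are well defined, and the elementary fact that $o_{\PP}(1)\cdot O_{\PP}(1)=o_{\PP}(1)$, which follows by splitting on the event where the bounded factor exceeds a threshold $M$ and choosing $M$ large.
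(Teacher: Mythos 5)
Your proof is correct and follows essentially the same route as the paper: bound the supremum by the integral over $[0,T]$, factor out $\sup_{r\in[0,T]}|Z_r^\varepsilon|$, and show the remaining integral is bounded in probability. The only (harmless, arguably cleaner) difference is that you justify this boundedness via Cauchy--Schwarz and the FQV definition applied to $X$ and $Y$ separately, whereas the paper simply invokes Remark~\ref{rem: cadlag ucp}.
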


\begin{proof}
	First, we note that $Z^{\varepsilon}$ is a.s. bounded for all $\varepsilon>0$ (with a bound possibly depending on $\omega$), i.e., the integral is well-defined. Then, we get: % with Hölder's inequality:
	\begin{align*}
		\sup_{t \in [0,T]} \left| \int_0^t Z^{\varepsilon}_r \frac{(X_{r+\varepsilon}-X_{r})(Y_{r+\varepsilon}-Y_{r})}{\varepsilon} \diff r \right| &\leq \int_0^T \left| Z^{\varepsilon}_r \frac{(X_{r+\varepsilon}-X_{r})(Y_{r+\varepsilon}-Y_{r})}{\varepsilon}  \right| \diff r \\
		&\leq \sup_{r \in [0,T]} \left| Z^{\varepsilon}_r \right| \cdot \int_0^T \left| \frac{(X_{r+\varepsilon}-X_{r})(Y_{r+\varepsilon}-Y_{r})}{\varepsilon} \right| \diff r,
	\end{align*}
	and the lemma follows from Lemma \ref{rem: cadlag ucp}.
	%The remaining argumentation is similar to the one from Lemma \ref{ucp convergence ubertrag single}: The first factor converges in probability to $0$ for $\varepsilon \to 0$ due to the assumption of the Lemma, and the second factor converges in probability to $[X,Y]_T$ for $\varepsilon \to 0$ by Definition \ref{quadratic variation alternative definition}. Hence, the term on the right hand side converges in distribution to $0$ using Slutsky's Theorem. Then, this convergence is also in probability due to the $0$ being constant which implies the statement of the Lemma.
\end{proof}

For the following lemma, we need the concept of boundedness in probability, see, e.g., van der Vaart \cite{van2000asymptotic}: 
\begin{definition}
	Let $I$ be an index set. A set of random variables $(X_\delta)_{\delta \in I}$ is bounded in probability (also known as uniformly tight) if there exists for all $\varepsilon>0$ an $M>0$ such that $\sup_{\delta \in I} \PP (|X_\delta|>M)<\varepsilon$.
\end{definition}

Note that $\LL^1$ boundedness implies via Markov's inequality boundedness in probability. 

\begin{lemma} \label{lem: ucp convergence single L1}
	Let $X$ be a continuous FQV process, $t \in [0,T]$, and $Z^{\varepsilon}$ be a measurable and a.s. bounded process for all $\varepsilon>0$ such that $\int_0^T |Z_r^{\varepsilon}| \diff r$ is bounded in probability in $\varepsilon>0$. Then:
	\begin{align*}
		\int_0^t Z^{\varepsilon}_r \left(X_{r+\varepsilon}-X_{r}\right) \diff r \xrightarrow[ucp]{\varepsilon \to 0} 0.
	\end{align*}
\end{lemma}

\begin{proof}
	Let $\omega \in \Omega$ be arbitrary such that $X(\omega)$ is continuous. %Then, there exists a $K$ compact such that $X_r (\omega) \in K$ for all $r \in [0,T]$. 
	Thus, $X(\omega)$ is uniformly continuous on $[0,T]$, i.e., $\sup_{r \in [0,T]} \left| X_{r+\varepsilon} (\omega)-X_{r} (\omega) \right| \xrightarrow[a.s.]{\varepsilon \to 0} 0$. Hence, we get:
	\begin{align*}
		\sup_{t \in [0,T]} \left| \int_0^t Z^{\varepsilon}_r \left(X_{r+\varepsilon}-X_{r}\right) \diff r \right| &\leq \int_0^T \left| Z^{\varepsilon}_r \left(X_{r+\varepsilon}-X_{r}\right) \right| \diff r \\
		&\leq \int_0^T \left| Z^{\varepsilon}_r \right| \diff r \cdot \sup_{r \in [0,T]} \left| X_{r+\varepsilon} (\omega)-X_{r} (\omega) \right| \xrightarrow[\PP]{\varepsilon \to 0} 0.
	\end{align*}
\end{proof}

\begin{lemma} \label{lem: ucp convergence single}
	Let $X$ be a FQV process, $t \in [0,T]$, and $Z^{\varepsilon}$ be a measurable and a.s. bounded process for all $\varepsilon>0$ such that $\int_0^T \left|Z_r^{\varepsilon}\right|^2 \diff r \xrightarrow[\PP]{\varepsilon \to 0} 0$. Then:
	\begin{align*}
		\int_0^t Z^{\varepsilon}_r \frac{X_{r+\varepsilon}-X_{r}}{\sqrt{\varepsilon}} \diff r \xrightarrow[ucp]{\varepsilon \to 0} 0.
	\end{align*}
\end{lemma}

\begin{proof}
	We get with Cauchy-Schwarz:
	\begin{align*}
		\sup_{t \in [0,T]} \left| \int_0^t Z^{\varepsilon}_r \frac{X_{r+\varepsilon}-X_{r}}{\sqrt{\varepsilon}} \diff r \right| \leq \int_0^T \left| Z^{\varepsilon}_r \frac{X_{r+\varepsilon}-X_{r}}{\sqrt{\varepsilon}} \right| \diff r \leq \sqrt{\int_0^T \left| Z_r^{\varepsilon} \right|^2 \diff r} \sqrt{\int_0^T \frac{(X_{r+\varepsilon}-X_{r})^2}{\varepsilon} \diff r}.
	\end{align*}
	The first factor converges in probability to $0$ for $\varepsilon \to 0$ due to the assumption of the lemma. The second factor converges in probability to $\sqrt{[X]_T}$ for $\varepsilon \to 0$ by Definition \ref{def: quadratic variation alternative definition}.
\end{proof}

\begin{lemma} \label{lem: substitution}
	In the setting of Theorem \ref{th: main result}, it holds for all $t \in [r,T]$ and for all $\FF_r$-measurable random variables $G$ such that $\int_0^t |\beta(s,G)| \diff s < \infty$ a.s. and $\int_0^t |\gamma(s,G)|^2 \diff s < \infty$ a.s.:
	\begin{align*}
			\left. \int_r^t \beta(s,x) \diff s \right|_{x=G} &= \int_r^t \beta (s,G) \diff s,&
			\left. \int_r^t \gamma(s,x) \diff W_s \right|_{x=G} &= \int_r^t \gamma (s,G) \diff W_s.
		\end{align*}
\end{lemma}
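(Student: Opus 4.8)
The plan is to prove the two substitution formulas separately: the one for the Lebesgue integral is essentially a measurability statement, while the one for the It\^o integral requires approximating $G$ by simple $\FF_r$-measurable random variables.

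For the deterministic identity I would first note that $\int_r^t\beta(s,x)\,\diff s$ is the pathwise Lebesgue integral, so there is no version ambiguity. By property~(e) of Definition~\ref{def: stochastic flow}, applied along a countable exhaustion of $\Real^d$ by compact sets, the map $\phi(\omega,x):=\int_r^t\beta(s,x)(\omega)\,\diff s$ is a.s.\ finite for every $x$, and it is jointly measurable in $(\omega,x)$ by Tonelli's theorem, since $\beta$ is a jointly measurable random field. Hence $\omega\mapsto\phi(\omega,G(\omega))$ is a well-defined random variable --- finiteness being guaranteed precisely by the hypothesis $\int_0^t|\beta(s,G)|\,\diff s<\infty$ a.s.\ --- and it equals $\int_r^t\beta(s,G(\omega))(\omega)\,\diff s$ simply by inserting $x=G(\omega)$ into the definition of $\phi$. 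Note that $\FF_r$-measurability of $G$ is not used in this part.

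For the It\^o integral I would fix once and for all the version $\wh\psi(\omega,x)$ of $x\mapsto\int_r^t\gamma(s,x)\,\diff W_s$ that is continuous in $x$ --- it exists by Kolmogorov's continuity theorem together with the local $\alpha$-H\"older continuity of $\gamma$ (property~(d) of Definition~\ref{def: stochastic flow}), as recalled in the remark following that definition --- so that $\left.\int_r^t\gamma(s,x)\,\diff W_s\right|_{x=G}$ is by definition $\wh\psi(\cdot,G)$. First I would establish the identity for a simple $G=\sum_{k=1}^{N}x_k\1_{A_k}$ with $x_k\in\Real^d$ and $\{A_k\}\subseteq\FF_r$ a finite partition of $\Omega$: on $[r,t]$ we have $\gamma(s,G)=\sum_k\1_{A_k}\gamma(s,x_k)$, which is a genuine integrand (each $\1_{A_k}$ being $\FF_s$-measurable for $s\ge r$, and using property~(e) with the finite set $\{x_1,\dots,x_N\}$), and by linearity together with the standard fact that an $\FF_r$-measurable factor may be pulled out of the stochastic integral over $[r,\cdot]$,
\[
	\int_r^t\gamma(s,G)\,\diff W_s=\sum_{k=1}^{N}\1_{A_k}\int_r^t\gamma(s,x_k)\,\diff W_s=\sum_{k=1}^{N}\1_{A_k}\,\wh\psi(\cdot,x_k)=\wh\psi(\cdot,G)\qquad\text{a.s.},
\]
the middle equality using that $\wh\psi(\cdot,x_k)=\int_r^t\gamma(s,x_k)\,\diff W_s$ a.s.\ (everything read componentwise if $W$ is multi-dimensional). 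Then, for a general $\FF_r$-measurable $G$, I would take simple $\FF_r$-measurable approximants $G^n$ (e.g.\ dyadic truncation) with $G^n(\omega)\to G(\omega)$ for every $\omega$ and $\{G^n(\omega):n\ge1\}\cup\{G(\omega)\}$ contained in a closed ball $\bar B(0,m(\omega))$. Using property~(d) and Fubini --- so that for a.e.\ $\omega$ the map $x\mapsto\gamma(s,x)(\omega)$ is continuous for Lebesgue-a.e.\ $s$ --- together with the $\diff s$-integrable majorant $\sup_{x\in\bar B(0,m(\omega))}|\gamma(s,x)(\omega)|^2$ supplied by property~(e), dominated convergence gives $\int_r^t|\gamma(s,G^n)-\gamma(s,G)|^2\,\diff s\to0$ a.s., hence $\int_r^t\gamma(s,G^n)\,\diff W_s\to\int_r^t\gamma(s,G)\,\diff W_s$ in probability; on the other hand the simple case gives $\int_r^t\gamma(s,G^n)\,\diff W_s=\wh\psi(\cdot,G^n)\to\wh\psi(\cdot,G)$ a.s.\ by continuity of $\wh\psi(\omega,\cdot)$. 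Identifying the two limits proves the identity.

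The main obstacle is bookkeeping rather than a deep idea. Because the H\"older hypothesis on $\gamma$ holds only $\diff\PP\times\diff t$-a.e., one has to pass through Fubini to obtain, for a.e.\ $\omega$, continuity in $x$ for a.e.\ $s$ before dominated convergence can be applied; one must keep the approximants $G^n$ inside a random compact so that property~(e) yields an honest dominating function and so that $\gamma(\cdot,G^n)$ and $\gamma(\cdot,G)$ are bona fide stochastic integrands; and the ``pull out an $\FF_r$-measurable factor'' property, although standard, has to be used on $[r,t]$ rather than on $[0,t]$. Each point is routine, but together they are what the write-up must handle carefully.
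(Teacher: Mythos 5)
Your proof is correct, but it takes a different route from the paper: the paper disposes of the Lebesgue-integral identity exactly as you do (it is immediate from the $\omega$-wise definition of the $\diff s$-integral, and indeed no $\FF_r$-measurability of $G$ is needed there), but for the stochastic-integral identity it simply invokes Lemma 3.3 of Russo and Vallois \cite{russo1996ito}, which is precisely this substitution statement. You instead reprove that lemma from scratch: you fix the $x$-continuous version of $x\mapsto\int_r^t\gamma(s,x)\,\diff W_s$ (whose existence the paper also takes for granted via the Kolmogorov-continuity remark after Definition \ref{def: stochastic flow}), verify the identity for simple $\FF_r$-measurable $G$ using the pull-out property of an $\FF_r$-measurable factor over $[r,t]$, and pass to the limit by combining a.s.\ convergence of $\wh\psi(\cdot,G^n)$ with convergence in probability of the stochastic integrals, the latter obtained from $\int_r^t|\gamma(s,G^n)-\gamma(s,G)|^2\,\diff s\to0$ a.s.\ via Fubini, the local H\"older hypothesis (d), and the dominating function from (e). This is exactly the standard argument behind the cited lemma, so nothing is gained in generality, but your version is self-contained and makes explicit where adaptedness of $\gamma(\cdot,G)$ on $[r,t]$, the choice of a continuous version, and the integrability in Definition \ref{def: stochastic flow}(\ref{def, item: integrability}) enter; the paper's citation is of course shorter. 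The only points you lean on implicitly --- measurability in $s$ of $\sup_{x\in K}|\gamma(s,x)|^2$ and the construction of truncated dyadic approximants staying in a random ball --- are routine and do not affect correctness.
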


\begin{proof}
	The first property follows directly from the $\omega$-wise definition of the ``$\diff s$''-integral. The second property is an immediate consequence of Lemma 3.3 in Russo and Vallois \cite{russo1996ito}.
\end{proof}

\begin{lemma} \label{lem: show old assumption}
	Consider the setting of Theorem \ref{th: main result} and assume that $X$ takes only values in a compact set $K \subset \Real$. Then Assumption \ref{th, item: gamma lemma} implies that there exist $B_2>0$ such that $\sup_{s \in [0,T]} \EX \left[\gamma(s,X_t)^{2+\varrho}\right] \leq B_2$ for all $t \in [0,T]$ and for all $\varrho>0$.
\end{lemma}

\begin{proof}
	The lemma follows from Assumption \ref{th, item: gamma lemma} in Theorem \ref{th: main result}.
\end{proof}

%\begin{proof}
%	By assumption, there exists a $B_3>0$ such that $|X_t(\omega)| \leq B_3$ a.s. for all $t \in [0,T]$. Moreover, it follows from the second part of Assumption \ref{th, item: gamma lemma} that there exists a $B_4>0$ such that $\EX \left[\sup_{t \geq 0} |\gamma(t,0)|^p \right] \leq B_4$. Now, we are ready to show the desired property for any $\varrho>0$ and for almost any $\omega \in \Omega$:
%	\begin{align*}
%		\sup_{s \in [0,T]} \EX \left[\gamma(s,X_t)^{2+\varrho}\right] &\leq \sup_{s \in [0,T]} \EX \left[\left(|\gamma(s,X_t)-\gamma(s,0)|+|\gamma(s,0)|\right)^{2+\varrho}\right] \\
%		&\leq 2^{2+\varrho} \EX \left[ \sup_{s \in [0,T]} |\gamma(s,X_t)-\gamma(s,0)|^{2+\varrho}\right] + 2^{2+\varrho} \EX \left[ \sup_{s \in [0,T]} |\gamma(s,0)|^{2+\varrho}\right] \\
%		&\leq 2^{2+\varrho} C_{2+\varrho} \EX\left[|X_t|^{2+\varrho} \right] + 2^{2+\varrho} B_4 \\
%		&\leq 2^{2+\varrho} C_{2+\varrho} B_3 + 2^{2+\varrho} B_4 =: B_2,
%	\end{align*}
%	where we additionally used the inequality $|a+b|^p \leq 2^p (|a|^p+|b|^p)$ for all $p \geq 1$ and $a,b \in \Real$ in the second inequality, and Assumption \ref{th, item: gamma lemma} in the third inequality.
%\end{proof}

Now, we are ready to prove our main result:

\begin{myproof}[Proof of Theorem \ref{th: main result}]
	We can write $\BB^X$ as
	\begin{align*}
		\BB^X (F)_t = F(t,X_t) - F(0,X_0) - \int_0^t \gamma (r,X_r) \diff W_r - \int_0^t F_x (r,X_r) \diff M_r.
	\end{align*}
	Hence, $\BB^X$ is linear and continuous (in $F$) with respect to the $ucp$ topology. The property that $F(t,X_t)$ is a weak Dirichlet process follows directly from $\mathcal{B}^X$ being a weak zero energy process (which we show later) since $M$ and $W$ are local martingales.
	
	If $F(t,x)\in C^{0,2}$ and $\gamma(t,x),\beta(t,x) \in C^{0,1}$ with $\alpha$-H{\"o}lder-continuous derivatives in $x$, then Flandoli and Russo \cite[p.279]{flandoli2002generalized} give us the It{\^o}-Ventzell formula for weak Dirichlet processes:
	\begin{align*}
		F(t,X_t) =\ &  F(0,X_0) + \int_0^t \beta (r,X_r) \diff r + \int_0^t \gamma (r,X_r) \diff W_r + \int_0^t F_x (r,X_r) \diff^{-} X_r \\
		&+ \int_0^t \gamma_x (r,X_r) \diff [X,W]_r + \frac{1}{2} \int_0^t F_{xx} (r,X_r) \diff [X]_r.
	\end{align*}
	Next note that $\int_0^t F_x (r,X_r) \diff^{-} X_r$ exists. Due to $\int_0^t F_x (r,X_r) \diff^{-} M_r = \int_0^t F_x (r,X_r) \diff M_r$ coinciding with the classical stochastic integral, $\int_0^t F_x (r,X_r) \diff^{-} A_r$ also exists. Moreover, when carefully scanning the to our setting adapted proof of the It{\^o}-Ventzell formula in Flandoli and Russo \cite[pp.279-280]{flandoli2002generalized}, we can reduce the regularity assumptions of $\beta$ and $\gamma$ due to Lemma \ref{lem: substitution}. Thus, it is sufficient to assume that $\beta(t,x) \in C^{0,1}$ (without H{\"o}lder-continuity of $\beta_x(t,x)$ in $x$) and $\gamma(t,x) \in C^{0,1+\alpha}$ (with only local $\alpha$-H{\"o}lder-continuity of $\gamma_x(t,x)$ in $x$) in case (\ref{th, item: proof b}).
	
	Hence, it remains to prove the case (\ref{th, item: proof a}) where $F \in C^{0,1}$ and the Assumptions 1-3 hold:\\
	First of all, since $X$ is continuous, we can use a classical localization argument and assume that there exists a compact set $K \subset \Real$ such that $X_t (\omega) \in K$ for all $t \in [0,T]$ and almost all $\omega \in \Omega$. Then, by using again classical localization arguments%(see, e.g., Steele \cite[pp.95-110]{steele2001stochastic})
	, we can assume that $\sup_{x \in K} |\beta (t,x)| \in \LL^{1}(\Omega \times [0,T],\diff \PP \times \diff t)$, $\sup_{x \in K} |\gamma (t,x)| \in \LL^{2}(\Omega \times [0,T],\diff \PP \times \diff t)$ both from Definition \ref{def: stochastic flow}(\ref{def, item: integrability}). Now, the prerequisites of Lemma \ref{lem: substitution} are fulfilled, and thus %\ref{th, item: gamma expected value}, 
	all of the (stochastic) integrals in this proof are well-defined.
	
	Let $N$ be a continuous local martingale. Since $N$, $M$, and $W$ are semimartingales, we know:
	\begin{align*}
		\left[\int_0^{\cdot} F_x (r,X_r) \diff M_r, N \right] &= \int_0^{\cdot} F_x (r,X_r) \diff \left[M, N \right]_r,\\
		\left[\int_0^{\cdot} \gamma (r,X_r) \diff W_r, N \right] &= \int_0^{\cdot} \gamma (r,X_r) \diff \left[W, N \right]_r.
	\end{align*}
	Hence, it remains to show: 
	\begin{align*}
		\left[ F(\cdot,X),N\right] = \int_0^{\cdot} F_x (r,X_r) \diff \left[M,N\right]_r + \int_0^{\cdot} \gamma (r,X_r) \diff \left[W,N\right]_r.
	\end{align*}
	Using Definition \ref{def: quadratic variation alternative definition}, we consider the $ucp$ limit of
	\begin{align*}
		\int_0^t &\left[F(r+\varepsilon,X_{r+\varepsilon})-F(r,X_{r})\right] \dfrac{N_{r+\varepsilon}-N_{r}}{\varepsilon} \diff r \\
		&= \int_0^t \left[F(r+\varepsilon,X_{r+\varepsilon})-F(r+\varepsilon,X_{r})\right] \dfrac{N_{r+\varepsilon}-N_{r}}{\varepsilon} \diff r + \int_0^t \left[F(r+\varepsilon,X_{r})-F(r,X_{r})\right] \dfrac{N_{r+\varepsilon}-N_{r}}{\varepsilon} \diff r \\
		&=: I_1 (t,\varepsilon) + I_2 (t,\varepsilon).
	\end{align*}
	We first show: $\lim_{\varepsilon \to 0} I_1 (t,\varepsilon) = \int_0^t F_x (r,X_r) \diff \left[M,N\right]_r$. It is
	\begin{align*}
		I_1 (t,\varepsilon) &= \int_0^t F_x(r,X_{r})\left(X_{r+\varepsilon}-X_{r}\right) \dfrac{N_{r+\varepsilon}-N_{r}}{\varepsilon} \diff r \\
		&\hspace{12pt}+ \int_0^t \left(F_x(r+\varepsilon,X_{r})-F_x(r,X_{r})\right) \left(X_{r+\varepsilon}-X_{r}\right) \dfrac{N_{r+\varepsilon}-N_{r}}{\varepsilon} \diff r \\
		&\hspace{12pt}+ \int_0^t \int_0^1 \left\{ F_x (r+\varepsilon,X_{r}+\lambda(X_{r+\varepsilon}-X_{r})) - F_x(r+\varepsilon,X_{r}) \right\} \diff \lambda \left(X_{r+\varepsilon}-X_{r}\right)\dfrac{N_{r+\varepsilon}-N_{r}}{\varepsilon} \diff r \\
		&=: I_{1,1} (t,\varepsilon) + R_{1,1} (t,\varepsilon) + R_{1,2} (t,\varepsilon).
	\end{align*}
	We know that $r \mapsto F_x(r,X_r)$ is c{\`a}dl{\`a}g since $F \in C^{0,1}$ and $X$ is a.s. continuous. Thus, Lemma \ref{rem: cadlag ucp} implies that $\lim_{\varepsilon \to 0} I_{1,1} (t,\varepsilon) = \int_0^t F_x (r,X_r) \diff \left[X,N\right]_r = \int_0^t F_x (r,X_r) \diff \left[M,N\right]_r$ since $[A,N]_r\equiv 0$. Hence, it remains to show: $\lim_{\varepsilon \to 0} R_{1,1} (t,\varepsilon) = 0$ and $\lim_{\varepsilon \to 0} R_{1,2} (t,\varepsilon) = 0$ in $ucp$. 
	For this purpose, we define for $r \in [0,T]$ and $\varepsilon>0$:
	\begin{align*}
		Y_r^{\varepsilon} &:= F_x(r+\varepsilon,X_{r})-F_x(r,X_{r}), \\
		Z_r^{\varepsilon} &:= \int_0^1 \left\{F_x (r+\varepsilon,X_{r}+\lambda(X_{r+\varepsilon}-X_{r})) - F_x(r+\varepsilon,X_{r}) \right\}\diff \lambda.
	\end{align*}
	Let $\omega \in \Omega$ be arbitrary such that $r \to X_r (\omega)$ is continuous. Hence, there exists a convex compact set $K$ such that $X_r (\omega) \in K$ for all $r \in [0,T]$. Since $F_x(t,x)$ is continuous, we know that $F_x(t,x)$ is uniformly continuous on $[0,T] \times K$. Thus, for all $L>0$ there exists a $0<\rho_{\omega}\leq1$ such that it holds for all $r \in [0,T]$ and for all $0<\varepsilon \leq \rho_{\omega}$: $\left| Y_r^{\varepsilon} (\omega) \right| \leq L$. (Note that $\norm{(r+\varepsilon,X_r(\omega))-(r,X_r(\omega))}_2 = \varepsilon \leq \rho_{\omega}$.) 
	In particular, we get for any arbitrary $L>0$ a $0<\rho_{\omega}\leq1$ such that $\sup_{r \in [0,T]} \left| Y_r^{\varepsilon} (\omega) \right| \leq L$ for all $0<\varepsilon \leq \rho_{\omega}$. Hence, $\sup_{r \in [0,T]} \left| Y_r^{\varepsilon} (\omega) \right| \xrightarrow[a.s.]{\varepsilon \to 0} 0$. \\
	Next, note that it holds that $X_{r}(\omega)+\lambda(X_{r+\varepsilon} (\omega) -X_{r}(\omega)) \in K$ for all $\lambda \in [0,1]$ since $X_r(\omega), \linebreak X_{r+\varepsilon}(\omega) \in K$. Moreover, $X_r (\omega)$ is uniformly continuous in $r$ for $r \in [0,T]$, i.e., for all $\tilde{L}>0$ there exists a $0<\tilde{\rho}_{\omega}\leq1$ such that for all $r \in [0,T]$ and for all $0<\varepsilon \leq \tilde{\rho}_{\omega}$ it holds: $\left| X_{r+\varepsilon}(\omega) -X_r(\omega) \right| \leq \tilde{L}$. Let $L>0$ arbitrary. 
	Since $F_x (t,x)$ is uniformly continuous on $[0,T] \times K$, there exists a $\delta>0$ such that for all $x,y \in K$ with $\norm{(t,x)-(t,y)}_2 \leq \delta$ it holds that $\left| F_x (t,x)-F_x(t,y)\right| \leq L$ for all $t \in [0,T]$ fixed. Now choose $\tilde{L} = \delta$ and define $\rho_{\omega} := \min \{\delta,\tilde{\rho}_{\omega} \}$. Then \[\norm{(r+\varepsilon,X_{r} (\omega)+\lambda (X_{r+\varepsilon} (\omega)-X_{r} (\omega))) - (r+\varepsilon,X_{r} (\omega)) }_2 = \lambda |X_{r+\varepsilon} (\omega)-X_{r} (\omega)| \leq \delta\] for all $0<\varepsilon \leq \rho_{\omega}$, all $\lambda \in [0,1]$, and all $r \in [0,T]$. 
	Hence, it holds that \[\left|F_x (r+\varepsilon,X_{r}(\omega)+\lambda(X_{r+\varepsilon}(\omega)-X_{r}(\omega))) - F_x(r+\varepsilon,X_{r}(\omega))\right| \leq L \] for all $0<\varepsilon \leq \rho_{\omega}$, all $\lambda \in [0,1]$, and all $r \in [0,T]$. This implies that $\left| Z_r^{\varepsilon} (\omega) \right| \leq L$ for all $0<\varepsilon \leq \rho_{\omega}$ and all $r \in [0,T]$. In particular, we get $\sup_{r \in [0,T]} \left| Z_r^{\varepsilon} (\omega) \right| \leq L$ for all $0<\varepsilon \leq \rho_{\omega}$. 
	Since we chose $L>0$ and $\omega \in \Omega$ arbitrary, $\sup_{r \in [0,T]} \left| Z_r^{\varepsilon} \right| \xrightarrow[a.s.]{\varepsilon \to 0} 0$. Now, Lemma \ref{lem: ucp convergence double} yields that $\lim_{\varepsilon \to 0} R_{1,1} (t,\varepsilon) = 0$ and $\lim_{\varepsilon \to 0} R_{1,2} (t,\varepsilon) = 0$ in $ucp$.
	
	Let us next show that $\lim_{\varepsilon \to 0} I_2 (t,\varepsilon) = \int_0^t \gamma (r,X_r) \diff \left[W, N \right]_r$. We use \eqref{eq: definition formula F} combined with Lemma \ref{lem: substitution} (whose conditions are fulfilled due to the localization arguments at the beginning) in $I_2$ to write:
	\begin{align*}
		I_2 (t,\varepsilon) &= \int_0^t \int_{r}^{r+\varepsilon} \beta (s,X_r) \diff s \dfrac{N_{r+\varepsilon}-N_{r}}{\varepsilon} \diff r + \int_0^t \int_{r}^{r+\varepsilon} \gamma (s,X_r) \diff W_s \dfrac{N_{r+\varepsilon}-N_{r}}{\varepsilon} \diff r \\
		&= \int_0^t \int_{r}^{r+\varepsilon} \gamma (r,X_{r}) \diff W_s \dfrac{N_{r+\varepsilon}-N_{r}}{\varepsilon} \diff r + \int_0^t \int_{r}^{r+\varepsilon} \beta (s,X_r) \diff s \dfrac{N_{r+\varepsilon}-N_{r}}{\varepsilon} \diff r \\
		&\hspace{12pt} + \int_0^t \int_{r}^{r+\varepsilon} \left[ \gamma (s,X_r) - \gamma(r,X_{r})\right] \diff W_s \frac{N_{r+\varepsilon}-N_{r}}{\varepsilon} \diff r \\
		&=: I_{2,1} (t,\varepsilon) + R_{2,1} (t,\varepsilon) + R_{2,2} (t,\varepsilon).
	\end{align*}
	For $I_{2,1}$, we get due to Lemma \ref{rem: cadlag ucp} and Assumption \ref{th, item: gamma continuity}:
	\begin{align*}
		I_{2,1} (t,\varepsilon) = \int_0^t \gamma (r,X_{r}) \dfrac{(W_{r+\varepsilon}-W_{r})(N_{r+\varepsilon}-N_{r})}{\varepsilon} \diff r \xrightarrow[ucp]{\varepsilon \to 0}  \int_0^t \gamma (r,X_r) \diff \left[W, N \right]_r.
	\end{align*}
	For the two terms remaining, we define for $r \in [0,T]$ and $\varepsilon>0$:
	\begin{align*}
		\widetilde{Y}^{\varepsilon}_r &:= \dfrac{1}{\varepsilon}\int_{r}^{r+\varepsilon} \beta (s,X_r) \diff s, &
		\widetilde{Z}^{\varepsilon}_r &:= \dfrac{1}{\sqrt{\varepsilon}} \int_{r}^{r+\varepsilon} \left[ \gamma (s,X_r) - \gamma(r,X_{r})\right] \diff W_s.
	\end{align*}
	First, we note that both $\widetilde{Y}^{\varepsilon}$ and $\widetilde{Z}^{\varepsilon}$ are a.s. bounded for all $\varepsilon>0$ since $\sup_{x \in K} |\beta (t,x)| \in \LL^{1}(\Omega \times [0,T],\diff \PP \times \diff t)$, $\sup_{x \in K} |\gamma (t,x)| \in \LL^{2}(\Omega \times [0,T],\diff \PP \times \diff t)$, and $X$ is a.s. continuous.\\
	We continue with $R_{2,1}$: In the case of Assumption \ref{th, item: beta integrability}, classical localization arguments give us the existence of a $\tilde{B}_1>0$ such that $\int_0^T \sup_{s \in [0,T]} |\beta (s,X_t)| \diff t \leq \tilde{B}_1$ a.s. and we get a.s.:
	\begin{align*}
		\int_0^T |\widetilde{Y}^{\varepsilon}_r| \diff r \leq \int_0^T \dfrac{1}{\varepsilon} \int_{r}^{r+\varepsilon} \left|\beta (s,X_r)\right| \diff s \diff r \leq \int_0^T \sup_{s \in [0,T]} |\beta (s,X_r)| \diff r \leq \tilde{B}_1.
	\end{align*}
	In the case of Assumption \ref{th, item: beta integral expected value}, we get:
	\begin{align*}
		\EX \left[ \left| \int_0^T |\widetilde{Y}^{\varepsilon}_r| \diff r \right| \right] &\leq \EX \left[ \int_0^T \dfrac{1}{\varepsilon} \int_{r}^{r+\varepsilon} \left|\beta (s,X_r)\right| \diff s \diff r \right] \\
		&= \int_0^T \dfrac{1}{\varepsilon} \int_{r}^{r+\varepsilon} \EX \left[ \left| \beta (s,X_r) \right|  \right] \diff s \diff r  \leq \int_0^T \sup_{s \in [0,T]} \EX \left[ \left| \beta(s,X_r)\right| \right] \diff r \leq B_1,
	\end{align*}
	i.e., $\int_0^T |\widetilde{Y}^{\varepsilon}_r| \diff r$ is bounded in $\LL^1(\diff \PP)$. Hence, in both cases it holds that $\int_0^T |\widetilde{Y}^{\varepsilon}_r| \diff r$ is bounded in probability and Lemma \ref{lem: ucp convergence single L1} implies that $\lim_{\varepsilon \to 0} R_{2,1} = 0$ in $ucp$. 
	
	Finally, we analyze $R_{2,2}$: First, we get for $r \in [0,T]$ fixed:
	\begin{align*}
		\widetilde{Z}^{\varepsilon}_r &= \dfrac{W_{r+\varepsilon}-W_{r}}{\sqrt{\varepsilon}} \cdot \dfrac{1}{W_{r+\varepsilon}-W_{r}} \int_{r}^{r+\varepsilon} \left[ \gamma (s,X_r) - \gamma(r,X_{r})\right] \diff W_s.
	\end{align*}
	The first fraction is $\mathcal{N} (0,1)$-distributed for all $\varepsilon$, and hence converges in distribution to $Z \sim \mathcal{N} (0,1)$ for $\varepsilon \to 0$. The second part converges in probability to $0$ for $\varepsilon \to 0$ due to Protter \cite[p.96]{Protter}. Note that we are allowed to use that result since, for $s \geq r$, $H_s := \gamma (s,X_r) - \gamma(r,X_{r})$ is adapted and continuous due to $\gamma$ being continuous in the first argument. Hence, Slutsky implies that $\widetilde{Z}^{\varepsilon}_r \xrightarrow[d]{\varepsilon \to 0} 0$. Due to $0$ being constant, this convergence also holds in probability.\\
	In the next step, we want to show that this convergence also holds in $L^2 (\diff \PP)$. For this purpose, we show that $|\widetilde{Z}^{\varepsilon}_r |^2$ is uniformly integrable. Let $\varrho>0$ be arbitrary. %Let $\varrho>0$ be as in the assumption in the case of Assumption \ref{th, item: gamma expected value} or let $\varrho>0$ be arbitrary in the case of Assumption \ref{th, item: gamma bounded}. 
	Then, we get using Mao\cite[p.39]{Mao} (with $p=2+\varrho$):
	\begin{align*}
		\EX [|\widetilde{Z}^{\varepsilon}_r |^{2+\varrho}] &= \dfrac{1}{\varepsilon^{1+\frac{\varrho}{2}}} \EX [\left| \int_{r}^{r+\varepsilon} \left[ \gamma (s,X_r) - \gamma(r,X_{r})\right] \diff W_s \right|^{2+\varrho}] \\
		&\leq \dfrac{1}{\varepsilon^{1+\frac{\varrho}{2}}} \left(\dfrac{(2+\varrho)(1+\varrho)}{2}\right)^{\frac{2+\varrho}{2}} \varepsilon^{\frac{\varrho}{2}} \EX \left[ \int_{r}^{r+\varepsilon} \left|\gamma (s,X_r) - \gamma(r,X_{r}) \right|^{2+\varrho} \diff s \right].
	\end{align*}
	We define $\tilde{C} := \left(\frac{(2+\varrho)(1+\varrho)}{2}\right)^{\frac{2+\varrho}{2}}$ and $C := \tilde{C} \cdot 2^{2+\varrho}$. In particular, $\tilde{C}\leq C$. %Now, in the case of Assumption \ref{th, item: gamma bounded}, classical localization arguments give us the existence of a $\tilde{B}_2>0$ such that $\sup_{s,t \in [0,T]} \left|\gamma (s,X_t)\right| \leq \tilde{B}_2$ a.s. and we get for all $r \in [0,T]$:
	%\begin{align} \label{eq: 2rho moment 2a}
	%	\EX |\widetilde{Z}^{\varepsilon}_r |^{2+\varrho} &\leq \dfrac{1}{\varepsilon} \cdot \tilde{C} \cdot \varepsilon \cdot \esssup_{s \in [0,T]} \left|\gamma (s,X_r)\right|^{2+\varrho} \leq C \cdot \tilde{B}_2^{2+\varrho}.
	%\end{align}
	%In the case of Assumption \ref{th, item: gamma expected value}, we get for all $r \in [0,T]$: %using Fubini-Tonelli:
	Now, we get for all $r \in [0,T]$ using Lemma \ref{lem: show old assumption}:
	\begin{align} \label{eq: 2rho moment 2b}
		\EX [|\widetilde{Z}^{\varepsilon}_r |^{2+\varrho}] &\leq \tilde{C} \cdot \dfrac{1}{\varepsilon} 	\int_r^{r+\varepsilon} \EX \left|\gamma (s,X_r) - \gamma(r,X_{r}) \right|^{2+\varrho} \diff s \notag \\
		&\leq \tilde{C} \sup_{s \in [0,T]} \EX [\left|\gamma (s,X_r) - \gamma(r,X_{r}) \right|^{2+\varrho}] \leq C \sup_{s \in [0,T]} \EX [\left|\gamma (s,X_r)\right|^{2+\varrho}] \leq C \cdot B_2.
	\end{align}
	Thus, $\widetilde{Z}^{\varepsilon}_r \xrightarrow[\LL^2(\diff \PP)]{\varepsilon \to 0} 0$ for all $r \in [0,T]$. %in both cases. 
	Analogously to \eqref{eq: 2rho moment 2b}, we also get $\sup_{r \in [0,T]} \EX [|\widetilde{Z}^{\varepsilon}_r |^{2}] \leq C \cdot B_2$. %Analogously to \eqref{eq: 2rho moment 2a} or \eqref{eq: 2rho moment 2b} respectively, we also get $\sup_{r \in [0,T]} \EX |\widetilde{Z}^{\varepsilon}_r |^{2} \leq C \cdot \max \{ \tilde{B}_2^2, B_2\}$.
	Hence, the bounded convergence theorem gives us that $\int_0^T \EX |\widetilde{Z}^{\varepsilon}_r |^{2} \diff r \xrightarrow{\varepsilon \to 0} 0$.
	Thus, %Fubini-Tonelli implies that $\EX \left[ \int_0^T |\widetilde{Z}^{\varepsilon}_r |^{2} \diff r \right] \xrightarrow{\varepsilon \to 0} 0$, i.e., $\int_0^T |\widetilde{Z}^{\varepsilon}_r |^{2} \diff r \xrightarrow[\LL^1(\diff \PP)]{\varepsilon \to 0} 0$. 
	%Therefore, 
	$\int_0^T |\widetilde{Z}^{\varepsilon}_r |^{2} \diff r \xrightarrow[\PP]{\varepsilon \to 0} 0$ and Lemma \ref{lem: ucp convergence single} implies that $\lim_{\varepsilon \to 0} R_{2,2} = 0$ in $ucp$ which gives us the proof of the Theorem. 
\end{myproof}

\section{Applications} \label{sec: applications}

In this section, we apply Theorem \ref{th: main result} to derive a novel stochastic calculus formula for quadratic variations and a representation result for strong solutions of SPDEs.

\subsection{Formulas for quadratic variations}

In this subsection, we give in Proposition \ref{prop: new quadratic variation formula} a novel stochastic calculus formula for quadratic variations. Afterwards, we point out some special cases. Note that we cannot use the It{\^o}-Ventzell formula in $C^{0,2}$ or $C^{1,2}$ since the function $F$ defined below is not twice continuously differentiable in the space direction. %However, we could use an It{\^o}-formula in $C^{0,1}$ to get the special case in Corollary \ref{cor: quadratic variation formula}. , and the Meyer-It{\^o} formula (see, for instance, Protter \cite{Protter}) to get the special cases in Corollary \ref{ex: wt3}. 
We remark further that Proposition \ref{prop: new quadratic variation formula} is not a special case of the Meyer-It{\^o} formula (see, for instance, Protter \cite{Protter}) which is only applicable for delta-convex functions, i.e., for the difference of two continuous convex functions. However, the class of $C^1$-functions and the class of delta-convex functions are not included in each other, see, e.g., Zelen{\'y} \cite{zeleny2002example}.

\begin{proposition} \label{prop: new quadratic variation formula}
	Let $W$ be a one-dimensional Brownian Motion, $X = M+A$ be a continuous weak Dirichlet FQV process, and $f: \Omega \times [0,T] \times \Real \to \Real$ be such that $f(\cdot,x)$ is previsible for all $x \in \Real^d$ fixed, $f(t,W_t)$ is a.s. c{\`a}dl{\`a}g, and $f$ is continuously differentiable in $x$ with the derivative being H{\"o}lder-continuous (for an $\alpha>0$) for all $t \in [0,T]$. Moreover, we assume that for all $K \subset \Real$ compact exists a $C_K(\omega)\geq 0$ with $\EX[C_K^2]<\infty$ such that $\sup_{(t,x) \in [0,T] \times K} (|f(\omega,t,x)| + |f_x(\omega,t,x)|) \leq C_K(\omega)$. Then, it holds that
	\begin{align*}
		\left[\int_0^t f(r,x) \diff W_r \big|_{x=X_t},W_t \right] = \int_{0}^{t} f(r,X_r) \diff r + \int_{0}^{t} \int_0^r f_x(s,x) \diff W_s \big|_{x=X_r} \diff [M,W]_r.
	\end{align*}
\end{proposition}

\begin{proof}
	First, we define $F(t,x) := \int_0^t f(r,x) \diff W_r$ (where we suppress the $\omega$ as usual). Then, $F$ is a stochastic flow in the sense of Definition \ref{def: stochastic flow} with local characteristics $\beta(t,x)=0$ and $\gamma(t,x)=f(t,x)$. The assumptions of Theorem \ref{th: main result} are fulfilled since continuous differentiability implies local Lipschitz continuity. Moreover, it holds that $F_x(t,x) = \int_0^t f_x(r,x) \diff W_r$ due to the stochastic Leibniz integral rule. Then, Theorem \ref{th: main result} implies with the decomposition $X=M+A$:
	\begin{align*}
		\int_0^t f(r,x) \diff W_r \big|_{x=X_t} = 0 + \int_0^t f(r,X_r) \diff W_r + \int_0^t \int_0^r f_x(s,x) \diff W_s \big|_{x=X_r} \diff M_r + \mathcal{B}^X(F)_t,  
	\end{align*}
	where $\mathcal{B}^X(F)_t$ is a weak zero energy process. Hence, we get for any continuous local martingale $\tilde{M}$:
	\begin{align*}
		\left[\int_0^t f(r,x) \diff W_r \big|_{x=X_t},\tilde{M}_t \right] = \int_0^t f(r,X_r) \diff [W,\tilde{M}]_r + \int_0^t \int_0^r f_x(s,x) \diff W_s \big|_{x=X_r} \diff [M,\tilde{M}]_r.
	\end{align*}
	Finally, choosing $\tilde{M}:=W$ yields the proposition since $\diff [W]_r = \diff r$.
\end{proof}

\begin{corollary} \label{cor: quadratic variation formula}
	Let $W$ be a one-dimensional Brownian Motion, and $f: \Real \to \Real$ be a continuously differentiable function. Then, we get that:
	\begin{align*}
		[f(W_t)W_t,W_t] = \int_{0}^{t} f(W_r) \diff r + \int_{0}^{t} f'(W_r) W_r \diff r.
	\end{align*}
%	and if $X$ is orthogonal to $W$, it holds that:
%	\begin{align*}
%		[f(X_t)W_t,W_t] = \int_{0}^{t} f(X_r) \diff r.
%	\end{align*}
\end{corollary}

\begin{proof}
	This corollary follows immediately from Proposition \ref{prop: new quadratic variation formula} after setting $X=W$ with $\diff [W]_r = \diff r$. % resp. $X$ is orthogonal to $W$, i.e., $[X,W]=[M,W]=0$. 
	Moreover, note that $\int_{0}^{t} \int_0^r f_x(x) \diff W_s \big|_{x=X_r} \diff [M,W]_r = \int_{0}^{t} f'(X_r) W_r \diff [M,W]_r$ in this setting.
\end{proof}

For Corollary \ref{cor: quadratic variation formula}, we could have alternatively tried to use an It{\^o}-formula with weaker differentiability conditions.

\subsection{A representation result for strong solutions of SPDEs}

In this subsection, the overall target is to give a representation result for strong solutions of stochastic elliptic PDE for a special type of weak Dirichlet processes. We start by giving a generalization of the solution theory of SPDEs (naturally generalizing the theory of Gozzi and Russo \cite{Gozzi} who considered PDEs). %As in Gozzi and Russo \cite{Gozzi}, in the elliptical case all of the relating function are independent of $t$ due to the elliptical problem being time-independent.

Let $b: [0,T] \times \Real^d \to \Real^d$ and $\sigma: [0,T] \times\Real^d \to \Real^{d \times d}$ be two continuous functions. Then, we define the time-dependent elliptic operator $L_0 (t): D(L_0) \subset C^0 ([0,T] \times\Real^d) \to C^0 ([0,T] \times\Real^d)$ with $D(L_0) = C^{0,2} ([0,T] \times\Real^d)$ by $L_0(t) F(t,x) := b(t,x) F_x(t,x) + \frac{1}{2} tr(\sigma^\intercal(t,x) F_{xx}(t,x) \sigma(t,x)$, where $tr$ denotes the trace of a matrix. We then consider the stochastic inhomogeneous time-dependent elliptic problem for all $(t,x) \in [0,T] \times \Real^d$ and almost all $\omega \in \Omega$:
\begin{align} \label{def: elliptic pde}
	L_0(t) F(\omega,t,x) = h(\omega,t,x), 
\end{align}
where $h(\omega,\cdot) \in C^0([0,T] \times \Real^d)$ almost surely.

\begin{definition}
	We say that $F$ with local characteristics $(\beta(t,x),\gamma(t,x))$ is a strict solution to the stochastic elliptic problem \eqref{def: elliptic pde} if for a.e. $\omega \in \Omega$ it holds that $F(\omega,\cdot) \in D(L_0)$, $\beta(\omega,\cdot) \in C^{0,1}([0,T] \times \Real)$, $\gamma(\omega,\cdot) \in C^{0,1+\alpha}([0,T] \times\Real^d)$ for an $\alpha \in (0,1)$, and \eqref{def: elliptic pde} holds.
\end{definition}

Analogously to Gozzi and Russo \cite{Gozzi} who defined a strong solution for PDEs, we define a strong solution for SPDEs as follows:

\begin{definition} \label{def: strong solution}
	We say that a stochastic flow $F$ with local characteristics $(\beta( {t},x),\gamma( {t},x))$ is a strong solution to the stochastic elliptic problem \eqref{def: elliptic pde} if there exists a sequence $(F^n)_{n \in \N}$ with local characteristics $(\beta^n(\omega,t,x),\gamma^n(\omega,t,x))$ and a sequence $(h^n)_{n \in \N}$ such that for every $n \in \N$ and a.e. $\omega \in \Omega$ holds:
	\begin{enumerate}
		\myitem{1} $F^n (\omega,\cdot) \in D(L_0)$, and $h^n (\omega,\cdot) \in C^0([0,T] \times \Real^d)$,
		\myitem{2} $\beta^n(\omega,\cdot) \in C^{0,1}([0,T] \times \Real)$, and $\gamma^n(\omega,\cdot) \in C^{0,1+\alpha}([0,T] \times \Real^d)$ for an $\alpha \in (0,1)$,
		\myitem{3} $F^n$ is a strict solution of $L_0(t) F^n(\omega,t,x) = h^n(\omega,t,x)$ for all $(t,x) \in [0,T] \times \Real^d$ and a.e. $\omega \in \Omega$,
		\myitem{4} \label{defitem: strong solution Fh} $F^n (\omega,\cdot) \xrightarrow{n \to \infty} F(\omega,\cdot)$ and $h^n (\omega,\cdot) \xrightarrow{n \to \infty} h(\omega,\cdot)$ in $C^0([0,T] \times \Real^d)$ (equipped with the supremum-norm on compact sets), and
		\myitem{5} $\beta^n (\omega,\cdot) \xrightarrow{n \to \infty} \beta (\omega,\cdot)$ in $\LL^1(K;\diff t)$ for all compact sets $K \subset [0,T] \times \Real$.
	\end{enumerate}
\end{definition}

%\begin{definition} \label{def: strong solution}
%	We say that $F$ is a strong solution to the stochastic elliptic problem \eqref{def: elliptic pde} if there exists a sequence $(F^n)_{n \in \N}$ with local characteristics $(\beta^n(\omega,t,x),\gamma^n(\omega,t,x))$ and $F^n (\omega,\cdot) \in D(L_0)$ for all $n \in \N$ and a.e. $\omega \in \Omega$ and a sequence $(h^n)_{n \in \N}$ with $h^n (\omega,\cdot) \in C^0([0,T] \times \Real^d)$ for all $n \in \N$ and a.e. $\omega \in \Omega$ such that for every $n \in \N$ and a.e. $\omega \in \Omega$ holds:
%	\begin{enumerate}
%		\myitem{1} $\beta^n(\omega,\cdot) \in C^{0,1}([0,T] \times \Real)$, and $\gamma^n(\omega,\cdot) \in C^{0,1+\alpha}([0,T] \times \Real^d)$ for an $\alpha \in (0,1)$,
%		\myitem{2} $F^n$ is a strict solution of $L_0(t) F^n(\omega,t,x) = h^n(\omega,t,x)$ for all $(t,x) \in [0,T] \times \Real^d$ and a.e. $\omega \in \Omega$,
%		\myitem{3} \label{defitem: strong solution Fh} $F^n (\omega,\cdot) \xrightarrow{n \to \infty} F(\omega,\cdot)$ and $h^n (\omega,\cdot) \xrightarrow{n \to \infty} h(\omega,\cdot)$ in $C^0([0,T] \times \Real^d)$ (equipped with the supremum-norm on compact sets), and
%		\myitem{4} $\beta^n (\omega,\cdot) \xrightarrow{n \to \infty} \beta (\omega,\cdot)$ in $\LL^1(K;\diff t)$ for all compact sets $K \subset [0,T] \times \Real$.
%	\end{enumerate}
%\end{definition}

Note that this a generalization of the notion of a strict solution since, of course, every strict solution is also a strong solution by taking the constant sequences $F^n := F$ and $h^n := h$. Our representation theorem states then:

\begin{theorem}
	Let $b: [0,T] \times \Real^d \to \Real^d$ and $\sigma: [0,T] \times \Real^d \to \Real^{d \times d}$ be two continuous functions, and $F$ (with local characteristics $(\beta(\omega,t,x),\gamma(\omega,t,x))$) be a strong solution of problem \eqref{def: elliptic pde} where we denote the approximation sequence by $F^n$ (with local characteristics $(\beta^n(\omega,t,x),\gamma^n(\omega,t,x))$). Define $X_t := x_0 + \int_{0}^t \sigma (r,X_r) \diff W_r + A_t$, where $x_0 \in \Real^d$ and $A$ is a continuous weak zero energy process such that $X$ is a continuous FQV process. Furthermore, we assume:
	\begin{enumerate}
		\myitem{1} $F(\omega,\cdot) \in C^{0,1}([0,T] \times \Real^d)$ for a.e. $\omega \in \Omega$,
		\myitem{2} $\beta$ and $\gamma$ fulfill the assumptions of Theorem \ref{th: main result} with $\gamma(\omega,\cdot) \in C^{0,1} ([0,T] \times \Real^d)$,
		\myitem{3} $\gamma^n$ satisfy Assumption \ref{th, item: gamma lemma} of Theorem \ref{th: main result} for all $n \in \N$, and
		\myitem{4} it holds in $ucp$ that
		\begin{align} \label{eq: elliptic pde assumption}
			\lim_{n \to \infty} \Big|\int_0^t (F_x (r,X_r)-F^n_x (r,X_r)) \diff^- A_r \Big| &+ \Big|\int_0^t \tr((\gamma_x (r,X_r)-\gamma^n_x (r,X_r)) \sigma(r,X_r)) \diff r \Big| \notag \\
			&+ \Big|\int_0^t (F_x (r,X_r)-F^n_x (r,X_r)) b(r,X_r) \diff r \Big| = 0.
		\end{align}
		In particular, all of these integrals are assumed to exist.
	\end{enumerate}
	Then, we have that (suppressing the dependence on $\omega$)
	\begin{align*}
		F(t,X_t) =& F(0,X_0) + \int_0^t \gamma(r,X_r) \diff W_r + \int_0^t F_x(r,X_r) \sigma(r,X_r) \diff W_r + \int_0^t \beta (r,X_r) \diff r \\
		&+ \int_0^t F_x (r,X_r) \diff^- A_r + \int_0^t \tr(\gamma_x (r,X_r) \sigma(r,X_r)) \diff r
		+ \int_0^t h (r,X_r) \diff r \\
		&- \int_0^t b(r,X_r) F_x (r,X_r) \diff r.
	\end{align*}
\end{theorem}

\begin{proof}
	Since $F$ is a strong solution of \eqref{def: elliptic pde} with approximating sequences $F^n$ and $h^n$, we get with the second part of Theorem \ref{th: main result} (suppressing the dependence on $\omega$):
	\begin{align}
		F^n(t,X_t) =& F^n (0,X_0) + \int_0^t \gamma^n (r,X_r) \diff W_r + \int_0^t F^n_x(r,X_r) \sigma(r,X_r) \diff W_r + \int_0^t \beta^n (r,X_r) \diff r \notag \\
		&+ \int_0^t F^n_x (r,X_r) \diff^- A_r + \int_0^t \tr(\gamma^n_x (r,X_r) \sigma(r,X_r)) \diff r \notag \\
		&+ \frac{1}{2} \int_0^t \tr(\sigma^\intercal (r,X_r) F^n_{xx} (r,X_r) \sigma(r,X_r)) \diff r \notag \\
		=& F^n (0,X_0) + \int_0^t \gamma^n (r,X_r) \diff W_r + \int_0^t F^n_x(r,X_r) \sigma(r,X_r) \diff W_r + \int_0^t \beta^n (r,X_r) \diff r \notag \\ 
		&+ \int_0^t F^n_x (r,X_r) \diff^- A_r + \int_0^t \tr(\gamma^n_x (r,X_r) \sigma(r,X_r)) \diff r + \int_0^t L_0(r) F^n (r,X_r) \diff r \notag \\
		&- \int_0^t b(r,X_r) F^n_x (r,X_r) \diff r. \label{eq: elliptic pde c2}
	\end{align}
	Now, we define:
	\begin{align}
		M_t^n :=& F^n(t,X_t)-F^n (0,X_0) - \int_0^t \beta^n (r,X_r) \diff r - \int_0^t F^n_x (r,X_r) \diff^- A_r - \int_0^t \tr(\gamma^n_x (r,X_r) \sigma(r,X_r)) \diff r \notag \\
		&- \int_0^t L_0(r) F^n (r,X_r) \diff r + \int_0^t b(r,X_r) F^n_x (r,X_r) \diff r, \notag \\
		M_t :=& F(t,X_t)-F (0,X_0) - \int_0^t \beta (r,X_r) \diff r - \int_0^t F_x (r,X_r) \diff^- A_r - \int_0^t \tr(\gamma_x (r,X_r) \sigma(r,X_r)) \diff r \notag \\
		&- \int_0^t h (r,X_r) \diff r + \int_0^t b(r,X_r) F_x (r,X_r) \diff r. \label{eq: mt equation}
	\end{align}
	Note that for almost all $\omega \in \Omega$, it holds that $L_0(r) F^n (r,X_r(\omega)) = h^n(r,X_r(\omega)) \xrightarrow{n \to \infty} h (r,X_r(\omega))$ uniformly in $r$ due to \ref{defitem: strong solution Fh} in Definition \ref{def: strong solution} and $(r,X_r(\omega))$ taking values in a compact set for $r \in [0,t]$. Then, by the assumptions of a strong solution and by \eqref{eq: elliptic pde assumption}, it holds that $M_t^n \xrightarrow{n \to \infty} M_t$ in $ucp$. Moreover, it follows from \eqref{eq: elliptic pde c2} that $M_t^n$ is a local martingale. Proposition 4.4 of Gozzi and Russo \cite{Gozzi} implies then that $M$ is also a local martingale. Furthermore, note that $M_t$ is continuous in $t$ since $X$, $F$, and $A$ are continuous. Then, we define the weak zero energy process
	\begin{align*}
		\tilde{\mathcal{B}}^X (F)_t :=& \int_0^t \beta (r,X_r) \diff r + \int_0^t F_x (r,X_r) \diff^- A_r + \int_0^t \tr(\gamma_x (r,X_r) \sigma(r,X_r)) \diff r + \int_0^t h (r,X_r) \diff r \\
		&- \int_0^t b(r,X_r) F_x (r,X_r) \diff r.
	\end{align*}
	Rearranging \eqref{eq: mt equation} implies that $F(t,X_t) = F(0,X_0) + M_t + \tilde{\mathcal{B}}^X (F)_t$.	Next, we apply the first part of Theorem \ref{th: main result} on $F$ to get that $F(t,X_t) = F(0,X_0) + \int_0^t \gamma(r,X_r) \diff W_r + \int_0^t F_x(r,X_r) \sigma(r,X_r) \diff W_r + \mathcal{B}^X (F)_t$, where $\tilde{M}_t := \int_0^t \gamma(r,X_r) \diff W_r + \int_0^t F_x(r,X_r) \sigma(r,X_r) \diff W_r$ is a continuous local martingale. Finally, since $M_t$ and $\tilde{M}_t$ are continuous, the decomposition of the weak Dirichlet process $F(t,X_t)$ is unique. In particular, we get that $M_t=\tilde{M}_t$ and $\mathcal{B}^X (F)_t = \tilde{\mathcal{B}}^X (F)_t$ which implies the claim.
\end{proof}

\subsection{An example from financial mathematics}

In this subsection, we consider a simplified version of a model introduced in Bank and Baum \cite{bank2004hedging} which describes a wealth process of a large investor with reduced differentiability assumptions. The large investor directly influences the price with the amount of shares she/he buys/sells.

For this purpose, let $T>0$ and define for $0\leq t \leq T$ a price fluctuation process $P_t^\vartheta=P(t,\vartheta)$, $\vartheta \in \Real$, of the risky asset as a stochastic flow with local characteristics $(\beta(t,x),\gamma(t,x))$ as in Definition \ref{def: stochastic flow} with $\beta(t,x) \in C^{0,0}$ and $\gamma(t,x) \in C^{0,\alpha}$ for an $\alpha \in (0,1)$. The parameter $\vartheta$ denotes the number of shares the large investor holds. Moreover, we assume that $P$ fulfills the assumptions of Theorem \ref{th: main result}. (This is a relaxation compared to Bank and Baum \cite{bank2004hedging}, where second order differentiability is needed.) Let $X = (X_t)_{t \in [0,T]}$ be a (self-financing\footnote{That is, assuming that there are no intermediate capital injections.}) strategy for the large investor describing the number of shares held. Assuming that $X$ is a semimartingale, according to Bank and Baum \cite[p.5]{bank2004hedging}, the discounted bank account value, say $\Xi^X$, of the large investor evolves as 
\begin{align}
	\Xi_t^X = \Xi_0 - \int_0^t P(r,X_r) \diff X_r - [P(\cdot,X),X]_t, \quad \text{for $0\leq t \leq T$,} \label{eq: def bank}
\end{align}
where the integral models the fluctuations of the portfolio value until time $t$ and the quadratic variation term accounts for the sensitivity of the prices regarding the large investor's orders at time $t$. %since the order volume affects the price before the order is exercised.
By splitting an order into smaller packages, Bank and Baum \cite{bank2004hedging} argue that when liquidating the portfolio, the amount obtained by the large investor is given by $L(t,\vartheta) = \int_0^\vartheta P(t,x) \diff x$, where $\vartheta$ denotes the large investor's number of shares before liquidation. The real (or asymptotically realizable) wealth $V^X$ which can be achieved by the strategy $X$ until time $t$ is consequently given by
\begin{align}
	V_t^X = \Xi_t^X + L(t,X_t). \label{eq: def wealth}
\end{align}

\begin{proposition}
	Let $X$ be a continuous self-financing strategy which is a semimartingale. The dynamics of the real wealth process $V_t^X$ is then given by:
	\begin{align*}
		V_t^X = V_0^X + \int_0^t \beta^L (r,X_r ) \diff r + \int_0^t \gamma^L (r,X_r) \diff X_r  - \frac{1}{2} \int_0^t P_x (r,X_r) \diff[X]_r,
	\end{align*}
	where $\beta^L (t,\vartheta) = \int_0^\vartheta \beta(t,x) \diff x$ and $\gamma^L (t,\vartheta) = \int_0^\vartheta \gamma(t,x) \diff x$.
\end{proposition}

\begin{proof}
	First, we can use the standard decomposition $X=M+A$ with $M$ being a martingale and $A$ a finite variation process since $X$ is a semimartingale. Next, we notice by the definition of $L$, that $L$ is also a stochastic flow with local characteristics $(\beta^L (t,x), \gamma^L (t,x))$ in the sense of Definition \ref{def: stochastic flow} and fulfills the assumption of Theorem \ref{th: main result} (b). Indeed, we are allowed to interchange the integrals when determining the local characteristics of $L$ since $\beta(s,x) \in C^{0,0}$ and thus $\beta$ is bounded on $[0,t]\times[0,\vartheta]$ for a.e. $\omega \in \Omega$ fixed. Furthermore, $\gamma(s,x) \in C^{0,\alpha}$, and we can therefore apply the stochastic Fubini theorem, given in Lemma 2.6 resp. Lemma 2.7 in Krylov \cite{krylov2009wentzell}), for the stochastic integral. Hence, we get with Theorem \ref{th: main result} (b):
	\begin{align}
		L(t,X_t) =& L(0,X_0) + \int_0^t \beta^L (r,X_r) \diff r + \int_0^t \gamma^L (r,X_r) \diff W_r + \int_0^t L_x (r,X_r) \diff M_r  \notag \\
		&+ \int_0^t L_x (r,X_r) \diff^{-} A_r + \int_0^t \gamma^L_x (r,X_r) \diff [X,W]_r + \frac{1}{2} \int_0^t L_{xx} (r,X_r) \diff [X]_r \notag \\
		=& L(0,X_0) + \int_0^t \beta^L (r,X_r) \diff r + \int_0^t \gamma^L (r,X_r) \diff W_r + \int_0^t P (r,X_r) \diff X_r \notag \\
		&+ \int_0^t \gamma (r,X_r) \diff [X,W]_r + \frac{1}{2} \int_0^t P_x (r,X_r) \diff [X]_r, \label{eq: ltxt}
	\end{align}
	since $L_x = P$ with $\gamma_x^L = \gamma$. Next, we apply Theorem \ref{th: main result} (a) to $P$ which gives us:
	\begin{align*}
		P(t,X_t) =& P(0,X_0) + \int_0^t \gamma (r,X_r) \diff W_r + \int_0^t P_x (r,X_r) \diff M_r  + \mathcal{B}^X (P)_t.
	\end{align*}
	Hence, it follows that:
	\begin{align*}
		[P(\cdot,X),X]_t &= \int_0^t \gamma (r,X_r) \diff [X,W]_r + \int_0^t P_x (r,X_r) \diff [X,M]_r \\
		&= \int_0^t \gamma (r,X_r) \diff [X,W]_r + \int_0^t P_x (r,X_r) \diff [X]_r,
	\end{align*}
	since $[A,M]_r = 0 = [A]_r$ due to $A$ being a finite variation process.
	Thus, we get for the bank account value $\Xi$ (defined in \eqref{eq: def bank}):
	\begin{align}
		\Xi_t^X = \Xi_0 - \int_0^t P(r,X_r) \diff X_r - \int_0^t \gamma (r,X_r) \diff [X,W]_r - \int_0^t P_x (r,X_r) \diff [X]_r. \label{eq: xit}
	\end{align}
	Next, we plug \eqref{eq: ltxt} and \eqref{eq: xit} into \eqref{eq: def wealth}, then several terms cancel out and the claim follows.
\end{proof}

%\nocite{Autor}

%\setcounter{section}{0}
%\renewcommand{\thesection}{\Alph{section}}
%\section*{Appendix}
%\input{files/appendix}

\footnotesize
\bibliography{bibliography}
\footnotesize
\bibliographystyle{plain}

\footnotesize
\end{document}